\documentclass[11pt]{amsart}
\usepackage[utf8]{inputenc}

\usepackage{amssymb}
\usepackage{amsmath}
\usepackage{amsthm}
\usepackage{hyperref}
\usepackage[margin=1in]{geometry}
\usepackage{ytableau}
\usepackage{tikz}
\usepackage{mathtools}
\usepackage{derivative}
\usepackage{comment}

\newtheorem{theorem}{Theorem}[section]
\newtheorem{lemma}[theorem]{Lemma}

\newtheorem{definition}[theorem]{Definition}
\newtheorem{remark}[theorem]{Remark}

\linespread{1.2}

\newcommand{\ZZ}{\mathbb Z}

\newcommand{\CC}{\mathbb C}

\newcommand{\g}{\widehat{\mathfrak{sl}_2}}
\newcommand{\lam}{\lambda}
\newcommand{\Chess}{\text{Chess}}
\newcommand{\Tr}{\text{Tr}}
\newcommand{\Fc}{\mathcal F}
\DeclarePairedDelimiter\ket{\lvert}{\rangle}
\newcommand{\eps}{\varepsilon}

\title{Chess tableaux, powers of two and affine Lie algebras}
\author{Antoine Labelle}
\address{Department of Mathematics and Statistics \\ McGill University}
\email{antoine.labelle@mail.mcgill.ca}

\author{Stoyan Dimitrov}
\address{Department of Mathematics \\ Rutgers University}
\email{emailtostoyan@gmail.com}

\begin{document}

\begin{abstract}
    Chess tableaux are a special kind of standard Young tableaux where, in the chessboard coloring of the Young diagram, even numbers always appear in white cells and odd numbers in black cells. If, for $\lambda$ a partition of $n$, $\Chess(\lam)$ denotes the number of chess tableaux of shape $\lambda$, then Chow, Eriksson and Fan observed that $\displaystyle\sum_{\lam \vdash n} \Chess(\lam)^2$ is divisible by unusually large powers of $2$. In this paper, we give an explanation for this phenomenon, proving a lower bound of $n-O(\sqrt{n})$ for the $2$-adic valuation of this sum and a generalization of it. We do this by exploiting a connection with a certain representation of the affine Lie algebra $\g$ on the vector space with basis indexed by partitions. Our result about chess tableaux then follows from a study of the basic representation of $\g$ with coefficients taken from the ring of rational numbers with odd denominators.
\end{abstract}

\maketitle


\section{Introduction}

A \emph{partition} is a nonincreasing finite sequence of positive integers $\lam = (\lam_1, \ldots, \lam_{\ell})$. The $\lam_i$ are called the parts of $\lam$. Given a partition $\lam$, the size of $\lam$, denoted $|\lam|$ is the sum of the parts of $\lam$. We also write $\lam \vdash n$ to mean "$\lam$ has size $n$". We also define the length of partition $\lam$, denoted $\ell(\lam)$, as the number of parts of $\lam$. Given $k \in \ZZ_{\ge 1}$, we denote by $m_k(\lam)$ the multiplicity of $k$ as a part of $\lam$ (possibly $0$ if $k$ does not appear in $\lam$).

We identify a partition with its \emph{Young diagram}, which consists of a series of left-aligned rows of boxes, with the $i^\text{th}$ row from the top having $\lam_i$ boxes (for example, Figure \ref{fig:chess-tab} shows the Young diagram for the partition $(6,4,1)$). 

Given a partition $\lam$ of size $n$, a \emph{standard Young tableau} is a filling of the cells of $\lam$ with the integers from $1$ to $n$ such that every integer between $1$ and $n$ appears exactly once and such that the entries in each row and column are increasing from left to right and from top to bottom.

A \emph{chess tableau} is a standard Young tableau in which the entry in cell $(i,j)$ has the parity of $i + j + 1$. Figure \ref{fig:chess-tab} shows an example of a chess tableau, where the cells with odd entries are shown in grey and the cells with even entries in white. The cells are colored as in a chessboard, which explains the origin of the name.

\begin{figure}[ht!]
    \centering
    \ytableausetup{centertableaux, boxsize=30pt} \begin{ytableau} *(gray) 1 & 2 & *(gray) 3 & 6 & *(gray) 7 & 10 \\ 4 & *(gray) 5 & 8 & *(gray) 9 \\*(gray) 11 \end{ytableau}
    \caption{A chess tableau with $11$ cells}
    \label{fig:chess-tab}
\end{figure}

In \cite{CEF}, Chow, Eriksson and Fan study chess tableaux and specifically the enumeration of chess tableaux of a fixed shape $\lam$. Their main result is a bijection between Chess tableaux on Young diagrams with three rows and certain so-called "nonconsecutive tableaux". For a given partition $\lam$, the total number of chess tableaux of shape $\lam$ is denoted by $\Chess(\lam)$.
In the conclusion, the authors mention the intriguing, but unexplained, observation that the quantity $\displaystyle\sum_{\lam \vdash n} \Chess(\lam)^2$ is divisible by unusually high powers of $2$, where the sum is over all partitions of size $n$. For example, the first few values of $n$ give the following values for $\displaystyle\sum_{\lam \vdash n} \Chess(\lam)^2$ :

$$1,~
2,~ 2,~ 2^2 ,~ 2^3 ,~ 2^4 ,~ 2^4 \cdot 3,~ 2^5 \cdot 5,~ 2^6 \cdot 7,~ 2^{11} ,~ 2^8 \cdot 5^2 ,~
2^9 \cdot 61,~ 2^{10} \cdot 3 \cdot 41,$$
$$ 2^{11} \cdot 5 \cdot 59,~ 2^{11} \cdot 1523,~ 2^{13} \cdot 23 \cdot 83,~
2^{13} \cdot 11411,~ 2^{15} \cdot 103 \cdot 163,~ \ldots$$

In this paper, we give an explanation for this phenomenon by exploiting a connection of chess tableaux with the representation theory of the affine Lie algebra $\g$. In fact, we prove a more general statement concerning variants of chess tableaux. To formulate it, we will need the following definition.

\begin{definition}
Let $e$ be a positive integer.
Given a cell $(i,j)$ in the Young diagram of a partition, the \emph{$e$-residue} of the cell is $i-j$, seen as an element of $\ZZ/e\ZZ$, the integers modulo $e$.
Given a standard Young tableau $T$ of size $n$, the \emph{$e$-residue sequence} $(i_1, i_2, \ldots, i_n)$ of $T$ is defined by letting $i_k$ be residue of the cell containing $k$.

Given a sequence $v = (i_1, i_2, \ldots, i_n) \in (\ZZ/e\ZZ)^n$ and a partition $\lam \vdash n$, we let $C_e(v, \lam)$ be the number of  standard Young tableaux of shape $\lam$ with $e$-residue sequence $v$.
\end{definition}

For example, when $e=2$, the cells with residue $0$ in Figure \ref{fig:chess-tab} are the grey cells and the cells with residue $1$ are the white ones. Note that, in the case $e=2$ and $v=(0,1,0,1,\ldots)$, we recover exactly chess tableaux and $C_2(v, \lam)=\Chess(\lam)$. Our main result is the following :

\begin{theorem} \label{thm:v2sumsquares}
Let $a(n)$ be the number of triangular numbers between $1$ and $n$ (equivalently, $a(n)$ is the largest integer $m$ such that $\frac{m(m+1)}{2}\le n$). Given a positive integer $n$ and $v,w \in (\ZZ/2\ZZ)^n$, the sum
$$\sum_{\lam \vdash n} C_2(v, \lam) C_2(w, \lam)$$
is divisible by $2^{n-a(n)}$.
\end{theorem}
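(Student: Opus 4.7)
The plan is to realize the sum as a Shapovalov matrix element in the basic representation $V(\Lambda_0)$ of $\g$ and analyze it using the principal (Lepowsky-Wilson) realization, working over the ring $\ZZ_{(2)}$ of 2-local integers as suggested by the abstract. In the Misra-Miwa action of $\g$ on the Fock space $\Fc = \bigoplus_\lam \QQ\ket\lam$, the Chevalley generators $f_0, f_1$ add boxes of 2-residue $0$ and $1$ respectively, so $C_2(v,\lam) = \langle \lam \mid f_{v_n}\cdots f_{v_1} \mid \emptyset\rangle$ under the inner product making the partition basis orthonormal. This inner product agrees with the contravariant Shapovalov form on the $\g$-submodule of $\Fc$ generated by $\ket\emptyset$, which is isomorphic to $V(\Lambda_0)$. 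The sum in question thus becomes the matrix coefficient
$$\bigl\langle f_v\ket\emptyset,\; f_w\ket\emptyset\bigr\rangle, \qquad f_v := f_{v_n}\cdots f_{v_1}.$$

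Next, I would switch to the principal realization $V(\Lambda_0)\cong\QQ[y_1,y_3,y_5,\ldots]$, whose weight-$n$ piece has a basis indexed by partitions of $n$ into distinct parts via Euler's theorem. The key combinatorial input is that any such partition has at most $a(n)$ parts, since $1+2+\cdots+\ell \le n$ forces $\ell \le a(n)$. In this realization the Shapovalov form is diagonalized by (a suitable rescaling of) the Schur $Q$-polynomials $Q_\nu$ indexed by strict $\nu\vdash n$, and the norms $\langle Q_\nu,Q_\nu\rangle$ carry explicit powers of $2$; the point is that the 2-adic valuation of $\langle Q_\nu,Q_\nu\rangle$ grows linearly with $n$ and falls short of $n$ by at most $\ell(\nu)\le a(n)$.

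The crux of the argument is then to exhibit a $\ZZ_{(2)}$-lattice $L\subset V(\Lambda_0)$ that (i) contains $\ket\emptyset$, (ii) is preserved by both $f_0$ and $f_1$, and (iii) expands over $\ZZ_{(2)}$ in the (rescaled) $Q_\nu$ basis. Granting such an $L$, the vector $f_v\ket\emptyset$ lies in $L$ and expands as $\sum_\nu a_{v,\nu}\,Q_\nu$ with $a_{v,\nu}\in\ZZ_{(2)}$, so the inner product becomes $\sum_\nu a_{v,\nu}\,a_{w,\nu}\,\langle Q_\nu,Q_\nu\rangle$, each summand having 2-adic valuation at least $n - a(n)$, which is the desired bound.

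The main obstacle is producing this $\ZZ_{(2)}$-integral structure compatible with both realizations. On the Misra-Miwa side the operators $f_0, f_1$ have entries in $\{0,1\}$, but on the principal side they appear as modes of a principal vertex operator with rational coefficients whose denominators must be shown to be pure powers of $2$ and to combine with the $Q$-basis norms to give precisely the stated bound. Controlling these denominators — the technical heart of the paper — is precisely why the abstract emphasizes working over the ring of rational numbers with odd denominators.
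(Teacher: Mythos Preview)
Your outline is correct and matches the paper's approach almost exactly: the paper realizes the sum as $\langle f_v\ket\emptyset,\,f_w\ket\emptyset\rangle$ in the principal realization $\CC[p_1,p_3,p_5,\ldots]$, exhibits precisely the lattice you call $L$ (namely $\Delta=\ZZ_{(2)}[p_1,2p_3,4p_5,\ldots]$), proves its stability under $f_0,f_1$ via $2$-adic estimates on the vertex operator expansion, and then bounds the norms using exactly your inequality $\ell(\nu)\le a(n)$ for strict $\nu\vdash n$. The only cosmetic difference is that the paper works in the orthogonal monomial basis $\{2^{(n-\ell(\mu))/2}p_\mu\}$ over odd-part partitions $\mu$ rather than in a Schur $Q$-basis, and passes to strict partitions only at the last step via Euler's bijection (which yields the identity $\ell(\mu)-v_2(z_\mu)=\ell(\nu)$); this choice makes the stability computation a clean estimate on the explicit coefficients $2^{\ell(\mu)}/z_\mu$ of the $q_n$'s, which is indeed the ``technical heart'' you anticipated.
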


In the case $v=w=(0,1,0,1,0,\ldots)$, Theorem \ref{thm:v2sumsquares} implies that $\displaystyle\sum_{\lam \vdash n} \Chess(\lam)^2$ is divisible by $2^{n-a(n)}$, and thus it explains the observation of Chow, Eriksson and Fan that this sum is divisible by high powers of $2$. 

Chess tableaux, and more generally the notion residue sequences appear naturally in various fields like combinatorics, representation theory and mathematical physics. They arise for example in the modular representation theory of the symmetric group, and in the study of cyclotomic Hecke algebras (which are closely related) \cite{Mat}. Note, in particular, that a sum very similar to the sum $\sum_{\lam \vdash n} C_e(v,\lambda)C_e(w,\lambda)$ that we study appears in a graded dimension formula for a cyclotomic quiver algebra in \cite[Theorem 4.20]{BK}. For a connexion with the world of combinatorics, note that some relationships with Baxter permutations have been observed in \cite{CEF}.

Our proof of Theorem \ref{thm:v2sumsquares} relies crucially on a study of the representation theory of the affine Lie algebra $\g$. Note that a connection between chess tableaux and the loop group $\text{SL}_2(\CC[t,t^{-1}])$ has also been found previously by Scott \cite{Sco}, so it is not too surprising that the corresponding Lie algebra $\g$ also appears in the study of chess tableaux.

\begin{remark} \label{rmk:SYT}
If we denote by $\text{SYT}(\lam)$ the total number of standard Young tableaux of shape $\lambda$, then the sum $\sum\limits_{\lambda \vdash n} \text{SYT}(\lambda)^{2}$ is given by $n!$ (because of the Robinson-Schensted-Knuth correspondence \cite{Knu}) and a well-known fact is that the largest power of $2$ dividing $n!$ is $n-b(n)$, where $b(n)$ is the number of $1$s in the binary representation of $n$. Our result can be seen as an analog of this fact for chess tableaux. We should note that $b(n)\leq \log_{2} (n+1)$, while the number of triangular numbers $a(n)$ grows as $\sqrt{n}$. Therefore, the largest power of $2$ dividing the quantity $\sum\limits_{\lambda \vdash n} \Chess(\lambda)^{2}$ is actually smaller than the analogous quantity for standard Young tableaux.
\end{remark}

In section \ref{sec:defns}, we introduce the main algebraic objects that will be needed. In particular, we define the affine Lie algebra $\g$ and explain how it acts on the vector space $\Fc$ with basis indexed by partitions. We also introduce the basic representation of $\g$, which is an irreducible subrepresentation of $\Fc$. In Section \ref{sec:basic-rep-Z2}, we study the structure of the basic representation of $\g$ when using $\ZZ_{(2)}$ coefficients, where $\ZZ_{(2)}=\{\frac{a}{b}: a,b \in \ZZ, 2\nmid b\}$ is the localization of $\ZZ$ at the prime ideal $(2)$. In particular, we give a description of the $\ZZ_{(2)}$-module generated by the action of the generators of $\g$ on the "highest weight vector" (the basis vector corresponding to the empty partition). We show that the inner product of two elements of this module is highly divisible by $2$. Theorem \ref{thm:v2sumsquares} then follows easily from these algebraic results.

\section{Definitions of the main objects} \label{sec:defns}

In the following subsections, we introduce the necessary background for our results. We follow mainly the sources \cite{Kac} and \cite{Tin}.

\subsection{The affine Lie algebra $\g$}

In this section, we introduce the Lie algebra $\g$. This is the simplest case of a so-called "affine Lie algebra"\footnote{More precisely, in the terminology of \cite{Kac}, this is the affine Lie algebra of type $A_1^{(1)}$.}, a special class of infinite dimensional Lie algebras with a rich theory, that is known in particular to have connections with the combinatorics of Young tableaux \cite{Ari}. For our purposes, the following explicit description from \cite[Theorem 8.7]{Kac} will be the most convenient.

\begin{definition}
    Let $\mathfrak{sl}_2$ be the Lie algebra of $2 \times 2$ matrices over $\CC$ with zero trace, with the usual bracket $[X,Y]=XY-YX$. Define a $\ZZ/2\ZZ$-grading on $\mathfrak{sl}_2$ by setting
    \begin{align*}
        (\mathfrak{sl}_2)_0 &= \CC \begin{pmatrix} 1 & 0 \\ 0 & -1 \end{pmatrix} \\
        (\mathfrak{sl}_2)_1 &= \CC \begin{pmatrix} 0 & 1 \\ 0 & 0 \end{pmatrix} \oplus \CC \begin{pmatrix} 0 & 0 \\ 1 & 0 \end{pmatrix}.
    \end{align*}
    It is immediate to check that this indeed defines a grading for $\mathfrak{sl}_2$.
\end{definition}

\begin{definition}
    Let 
    \[\g' = \left(\bigoplus_{k \in \ZZ} t^k (\mathfrak{sl}_2)_{\bar{k}} \right)\oplus \CC c\]
    where $\bar{k}$ is the image of $k$ in $\ZZ/2\ZZ$, and $t^n$ and $c$ are formal symbols. This is a Lie algebra with bracket defined by
    \begin{align*}
        [t^m X, t^n Y] &= t^{m+n}[X,Y] + \frac{m}{2} \Tr(XY) \delta_{m+n}c \\
        [c, Z] &= 0
    \end{align*}
    for all $X, Y \in \mathfrak{sl}_2$ and $Z \in \g'$, where $\delta_{m+n}$ is the Kronecker delta (i.e. $\delta_n$ is $1$ if $n=0$ and $0$ otherwise). Let $\g = \g' \oplus \CC d_0$, where $d_0$ acts by 
    \[ [d_0, t^nX] = nt^nX, \]
    \[ [d_0, c] = 0. \]
    
    This Lie algebra comes with a natural $\ZZ$-grading, where the degree $k$ part is $t^k (\mathfrak{sl}_2)_{\bar{k}}$ for $k\ne 0$ and $(\mathfrak{sl}_2)_{0} \oplus \CC c \oplus \CC d_0$ for $k=0$.
\end{definition}

The derived algebra $\g'$ has generators $e_0, e_1, f_0, f_1$, called the Chevalley generators, given by
\begin{alignat*}{2}
    e_0 &= t\begin{pmatrix} 0 & 0 \\ 1 & 0\end{pmatrix} &\qquad e_1 &= t\begin{pmatrix} 0 & 1 \\ 0 & 0\end{pmatrix} \\ 
    f_0 &= t^{-1}\begin{pmatrix} 0 & 1 \\ 0 & 0\end{pmatrix} &\qquad f_1 &= t^{-1}\begin{pmatrix} 0 & 0 \\ 1 & 0\end{pmatrix}.
\end{alignat*}

\begin{definition} \label{def:involution}
    The \emph{Cartan involution} $\omega$ is the involutive automorphism of $\g$ defined by
    \begin{align*}
        \omega(t^nX)&=-t^{-n}X^T \\
        \omega(c)&=-c \\
        \omega(d_0)&=-d_0
    \end{align*}
    where $X\in (\mathfrak{sl}_2)_{\bar{n}}$ and $X^T$ is the transpose of $X$.
\end{definition}

\subsection{The representation of $\g$ on the Fock space}

Let $\Fc$ be the complex vector space with basis elements $\ket{\lam}$ indexed by partitions. Following the terminology in \cite{Tin} or \cite{LLT}, we call $\Fc$ the Fock space (a name originating from mathematical physics). We endow $\Fc$ with the inner product for which the $\ket{\lam}$ form an orthonormal basis, and with the grading for which the degree of $\ket{\lambda}$ is the size of $\lambda$. The Lie algebra $\g$ is related to the combinatorics of residue sequences through the following representation on $\Fc$ \cite{Lec, LLT, Tin}:

\begin{theorem}
    There exist a representation of $\g$ on $\Fc$ where the action of the generators is given as follows. For $i=0,1$,
    
    $$f_i \ket{\lam} = \sum_{\lam^+} \ket{\lam^+},$$
    where the sum is over all partitions $\lam^+$ obtained from $\lam$ by adding a cell of $2$-residue $i$,
    $$e_i \ket{\lam} = \sum_{\lam^-} \ket{\lam^-},$$
     where the sum is over all partitions $\lam^-$ obtained from $\lam$ by removing a cell of $2$-residue $i$ and
    $$d_0 \ket{\lam} = n \ket{\lam},$$
    where $\lam$ has size $n$.
\end{theorem}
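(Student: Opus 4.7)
The plan is to verify that the proposed operators on $\Fc$ satisfy the defining bracket relations of $\g$. By the Chevalley--Serre presentation for affine Kac--Moody algebras \cite{Kac}, $\g$ can be presented by the generators $e_0, e_1, f_0, f_1, d_0$ (together with $c$, which will arise as $[e_0, f_0] + [e_1, f_1]$) subject to a finite list of relations, so it suffices to check these relations as operator identities on $\Fc$.

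The central combinatorial computation is that of $[e_i, f_j]\ket{\lam}$. Given a cell $a$ addable to $\lam$ of residue $i$ and a cell $r$ removable from $\lam$ of residue $j$, whenever $a \neq r$ the two operations commute: $(\lam - r) + a = (\lam + a) - r$, with $a$ still addable to $\lam - r$ and $r$ still removable from $\lam + a$. Such cross terms contribute identically to $e_i f_j \ket{\lam}$ and $f_j e_i \ket{\lam}$, and cancel in the commutator. The surviving diagonal contributions yield $[e_i, f_i]\ket{\lam} = (N_i^+(\lam) - N_i^-(\lam))\ket{\lam}$ and $[e_i, f_{1-i}] = 0$, where $N_i^{\pm}(\lam)$ count addable/removable cells of residue $i$. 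A short computation with the bracket formula of $\g'$ (using $\Tr(E_{12}E_{21}) = 1$) identifies $c = [e_0, f_0] + [e_1, f_1]$, so our operator $c$ acts on $\ket{\lam}$ as the scalar $N_0^+ + N_1^+ - N_0^- - N_1^-$. This scalar equals $1$ on every partition, since addable and removable cells alternate along the outer boundary with exactly one extra addable cell; hence $c$ acts as the identity (and is automatically central). The commutators of $e_i$ and $f_i$ with $d_0$ follow directly from the action of $e_i$ and $f_i$ on the partition size, which makes them homogeneous with respect to the $\ZZ$-grading on $\g$.

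The main obstacle is the Serre relations $(\text{ad } e_i)^3(e_{1-i}) = 0$ and $(\text{ad } f_i)^3(f_{1-i}) = 0$ for $i \in \{0, 1\}$, arising from the affine Cartan matrix $\begin{pmatrix} 2 & -2 \\ -2 & 2 \end{pmatrix}$. I would attempt a direct verification by expanding the iterated commutators and pairing the resulting terms, which are indexed by short sequences of additions and removals of cells with prescribed residues, with signs tracking the order of operations; the cancellations should reflect the fact that addable cells of a given residue are linearly ordered along the boundary. A cleaner alternative is to appeal to the constructions in \cite{Lec, LLT, Tin}, where the Fock space representation is built from the basic representation of $\g$ and the same operators are shown to satisfy all the defining relations, so that matching our operators with theirs transfers the verification wholesale.
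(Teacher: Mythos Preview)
The paper does not prove this theorem; it is quoted as a known fact with references to \cite{Lec, LLT, Tin}, and your ``cleaner alternative'' of appealing to those sources is exactly what the paper does. Your direct verification via the Chevalley--Serre presentation is therefore more than the paper attempts, and the outline is essentially sound, with two points that deserve care.

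First, the cross-term cancellation is not quite as you state it. It is \emph{not} true that for every addable cell $a$ of $\lam$ and removable cell $r$ of $\lam$ with $a\neq r$ one still has $a$ addable to $\lam-r$ and $r$ removable from $\lam+a$: this fails precisely when $a$ lies immediately to the right of, or immediately below, $r$ (e.g.\ $\lam=(1)$, $r=(1,1)$, $a=(1,2)$). What \emph{is} true, and what you actually need, is that a pair $(a,r)$ with $a\neq r$ contributes to $e_if_j\ket\lam$ if and only if it contributes to $f_je_i\ket\lam$; the adjacent pairs just described contribute to neither side, so the bijection between contributing pairs holds and your conclusion $[e_i,f_i]\ket\lam=(N_i^+(\lam)-N_i^-(\lam))\ket\lam$, $[e_i,f_{1-i}]=0$ is correct.

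Second, the Serre presentation also requires the weight relations $[h_i,e_j]=a_{ij}e_j$ and $[h_i,f_j]=-a_{ij}f_j$ for $h_i=[e_i,f_i]$, which you do not mention. These amount to checking that adding a cell of residue $j$ changes $N_i^+(\lam)-N_i^-(\lam)$ by exactly $-a_{ij}$; this is an easy boundary count (the neighbours of the new cell have the opposite residue), but it is part of what must be verified before one can invoke the presentation. With these two points addressed, your sketch reduces the theorem to the cubic Serre relations, which---as you say---can either be checked directly or imported from the cited literature.
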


This representation is not irreducible, but one can show that the subrepresentation $R = U(\g)\ket{\emptyset}$ of $\Fc$ generated by the empty partition is irreducible \cite[Section 3]{Lec}, and actually isomorphic to the basic representation of $\g$ \cite[Proposition 3.35]{Tin} (where $U(\g)$ is the universal enveloping algebra). The basic representation will be important for us, so we give a precise description of it in the next section. 

\subsection{The basic representation}

As mentioned in the previous section, the subrepresentation $R = U(\g)\ket{\emptyset}$ of $\Fc$ generated by the empty partition is isomorphic to the basic representation of $\g$. In this section, we recall the description of this representation in terms of vertex operators from \cite[Chapter 14.7]{Kac}\footnote{Our notation here differs a little bit from that of \cite{Kac}. There, the polynomial ring is called $\CC[x_1, x_3, x_5, \ldots]$, and our $p_k$ corresponds to $kx_k$, but it is easy to translate all their formulas to get ours. This is done to simplify certain expressions and to match the conventions for symmetric functions (see Remark \ref{rmk:symm-fun}). }

\begin{definition}
    For $j\in\ZZ$, define
    \begin{equation*}
    A_j = t^j \cdot
    \begin{cases}
        \begin{pmatrix} -1 & 0 \\ 0 & 1 \end{pmatrix} \quad &\text{if } j\equiv 0 \pmod{2} \vspace{2mm} \\
        
        \begin{pmatrix} 0 & -1 \\ 1 & 0 \end{pmatrix} \quad &\text{if } j\equiv 1 \pmod{2}
    \end{cases}
    \end{equation*}
    Also define, for odd $k \ge 1$,
    \begin{align*}
        \alpha_k &= t^k \begin{pmatrix} 0 & 1 \\ 1 & 0 \end{pmatrix} \\
        \alpha_k^* &= t^{-k} \begin{pmatrix} 0 & 1 \\ 1 & 0 \end{pmatrix}
    \end{align*}
\end{definition}

Note that the $A_j$ for $j\in \ZZ$, the $\alpha_k$ and $\alpha_k^*$ for odd $k\ge 1$ and $c$ form a basis of $\g'$.

\begin{theorem} \label{thm:basic-rep}
R can be identified with the infinite polynomial ring $\CC[p_1, p_3, p_5, \ldots]$, with $\ket{\emptyset}$ identified with $1$. Under this identification, $c$ acts as the identity, $\alpha_k^*$ acts as multiplication by $p_k$, and $\alpha_k$ acts as $k\pdv{}{p_k}$ (formal differentiation of polynomials). Moreover we have the equality of formal power series
\begin{equation} \label{eq:vertex-operator}
    \sum_{j\in \ZZ} z^{-j} A_j = \frac{1}{2}\exp\left(-2 \sum_{\substack{k \ge 1\\\text{odd}}} \frac{p_k}{k} z^k\right)\exp\left(2 \sum_{\substack{k \ge 1\\\text{odd}}} \pdv{}{p_k}z^{-k}\right).
\end{equation}
The grading on $\Fc$ makes $R$ into a graded ring, where $p_k$ has degree $k$.
\end{theorem}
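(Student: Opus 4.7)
The plan is to prove Theorem \ref{thm:basic-rep} via the principal vertex operator construction of the basic representation, following \cite[Chapter 14.7]{Kac}. The approach has two halves: first, construct a representation of $\g$ on $V := \CC[p_1,p_3,p_5,\ldots]$ by adopting the formulas in the theorem as definitions; second, identify this representation with $R$.

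For the construction, let $c$ act on $V$ as the identity, let $\alpha_k^*$ act as multiplication by $p_k$, let $\alpha_k$ act as $k\,\partial/\partial p_k$, and define each $A_j$ as the coefficient of $z^{-j}$ on the right-hand side of \eqref{eq:vertex-operator}. Each $A_j$ is a well-defined operator because, applied to any fixed polynomial, the expansion of the differential-operator exponential terminates. Extend to all of $\g$ by letting $d_0$ be the grading operator with $\deg p_k = k$. To verify this is a representation of $\g$, one checks the bracket relations among the spanning set $\{A_j\}_{j\in\ZZ} \cup \{\alpha_k, \alpha_k^*\}_{k\text{ odd}} \cup \{c, d_0\}$. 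The Heisenberg relations $[\alpha_k, \alpha_j^*] = k\delta_{k,j}\,c$ and $[\alpha_k,\alpha_j] = 0 = [\alpha_k^*, \alpha_j^*]$ are immediate from the commutator $[\partial_{p_k}, p_j] = \delta_{j,k}$ and match the bracket computation in $\g'$ (using $\operatorname{Tr}(\sigma^2) = 2$ for $\sigma = \bigl(\begin{smallmatrix}0&1\\1&0\end{smallmatrix}\bigr)$). The relations involving $d_0$ are a bookkeeping check on degrees. The remaining relations $[\alpha_k, A_j]$, $[\alpha_k^*, A_j]$, and $[A_i, A_j]$ are verified by generating-function manipulations of the two exponential factors in \eqref{eq:vertex-operator}.

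To identify the $\g$-module $V$ with $R$, observe that $1 \in V$ is cyclic already under the action of the $\alpha_k^*$ alone (their monomials span $V$), and is annihilated by the Chevalley generators $e_0, e_1$. The latter lie in the two-dimensional degree-$1$ piece of $\g'$, which is also spanned by $A_1$ and $\alpha_1$; both kill $1$, since $\alpha_1 \cdot 1 = \partial_{p_1} \cdot 1 = 0$ while $A_1 \cdot 1$ is the coefficient of $z^{-1}$ in $\tfrac{1}{2}\exp(-2\sum_k \tfrac{p_k}{k}z^k)$, a series in nonnegative powers of $z$. Hence $1 \in V$ is a highest weight vector of the same weight as $\ket{\emptyset} \in R$. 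Comparing graded characters—both $V$ and the basic representation have character $\prod_{k\text{ odd}}(1-q^k)^{-1}$—one concludes that $V$ is irreducible, so that $V$ is isomorphic to the basic representation, which is isomorphic to $R$ by the remarks preceding the theorem. The resulting isomorphism sends $1 \mapsto \ket{\emptyset}$ and transports the stated actions to $R$.

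The main obstacle is the vertex-operator commutator computation, especially $[A_i, A_j]$. One rewrites the operator-valued product $A(z)A(w)$, with $A(z) := \sum_j z^{-j} A_j$, in normal-ordered form using the identity $e^X e^Y = e^{[X,Y]} e^Y e^X$ valid when $[X,Y]$ is central; this produces a scalar rational function in $z/w$ as prefactor, whose series expansion after antisymmetrization in $z,w$ yields the required Lie brackets. The calculation is mechanical in principle but requires care with the choice of formal power series expansions; it is carried out explicitly in \cite[Theorem 14.8]{Kac}, which can be invoked rather than reproduced here.
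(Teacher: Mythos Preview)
The paper does not actually prove this theorem: it is stated as background and attributed to \cite[Chapter~14.7]{Kac} (with a footnote on the change of variables $p_k = kx_k$). So there is no ``paper's own proof'' to compare against; the theorem functions as a citation.

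Your proposal is a faithful outline of the standard principal vertex operator construction from Kac's book, which is precisely the source the paper invokes, so in that sense you are aligned with the paper. The sketch is correct in its broad strokes. Two small points worth tightening: (i) to conclude that $1\in V$ has the same highest weight as $\ket{\emptyset}$ you should also record the action of the Cartan subalgebra ($h_0=[e_0,f_0]$, $h_1=[e_1,f_1]$, $c$, $d_0$) on $1$, not just that $e_0,e_1$ annihilate it; (ii) the character comparison argument is fine but the logic should be stated as: $V$ is a highest weight module, hence surjects onto the irreducible basic representation $L$, and equality of graded dimensions forces this surjection to be an isomorphism. With those clarifications your outline would stand as a self-contained proof, though since you ultimately defer the $[A_i,A_j]$ computation to \cite[Theorem~14.8]{Kac} anyway, the net effect is the same as the paper's bare citation.
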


In (\ref{eq:vertex-operator}), $p_k$ is to be interpreted as the operator of multiplication by $p_k$. More generally, given $f\in R$, we will often write simply $f$ to denote the "multiplication by $f$" operator. Note that, in the expansion of the right-hand side, the coefficient of any power of $z$ is an infinite sum of operators, but any $v\in R$ is annihilated by all but finitely many of the terms, so the infinite sum is a well-defined operator. 

Note also that our inner product on $\Fc$ induces an nondegenerate bilinear form on $R$, satisfying $(1,1)=1$ and 
\begin{equation}
    (Xu, v)=-(u, \omega(X)v)
\end{equation}
for all $X \in \g$, $u,v \in R$ (recall Definition \ref{def:involution} for the definition of the involution $\omega$). By \cite[Proposition 9.4]{Kac}, such a form is unique and an explicit description of this invariant form on $\CC[p_1, p_3, p_5, \ldots]$ can be given \cite[Equation 14.12.6]{Kac}. When $\mu=(\mu_1, \mu_2, \cdots, \mu_{\ell})$ is a partition with odd parts, we will denote by $p_\mu$ the monomial $p_{\mu_1}p_{\mu_2}\cdots p_{\mu_{\ell}}$.

\begin{theorem} \label{thm:R-inner-prod}
The monomials $p_\mu$ form an orthogonal basis of $R$ (i.e. $(p_\mu, p_\nu)=0$ for $\mu \ne \nu$), and
\begin{equation*}
    (p_\mu, p_\mu) = z_\mu
\end{equation*}
where
\begin{equation} \label{eq:z-def}
z_\mu = \prod_{\substack{k \ge 1\\\text{odd}}} k^{m_k}m_k!.
\end{equation}
for $m_k=m_k(\mu)$.\footnote{Note that this agree with the formula \cite[(4.7)]{Mac} for the Hall inner product on symmetric function in the power-sum basis. See Remark \ref{rmk:symm-fun} for the explanation of this.}
\end{theorem}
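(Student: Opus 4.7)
The plan is to extract an adjoint relation between multiplication and differentiation from the invariance property of the form, and then iterate it to compute $(p_\mu, p_\nu)$ directly.

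\emph{Setting up the adjoint.} First I would show that $\omega(\alpha_k^*) = -\alpha_k$. Since $\alpha_k^*=t^{-k}X$ with $X=\begin{pmatrix} 0 & 1 \\ 1 & 0 \end{pmatrix}$ symmetric, Definition \ref{def:involution} immediately gives $\omega(\alpha_k^*) = -t^k X^T = -\alpha_k$. The invariance property then yields
\[ (\alpha_k^* u, v) = -(u, \omega(\alpha_k^*) v) = (u, \alpha_k v), \]
which under the identification of Theorem \ref{thm:basic-rep} reads $(p_k u, v) = k(u, \pdv{}{p_k} v)$. Additionally, $\omega(d_0) = -d_0$ makes $d_0$ self-adjoint, so since it acts by the polynomial degree, elements of different polynomial degrees are orthogonal. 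In particular, $(1, f)$ equals the constant term of $f$ (using $(1,1)=1$).

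\emph{Iterating the adjoint.} Writing $p_\mu = p_{\mu_1} p_{\mu_2} \cdots p_{\mu_\ell}$ and repeatedly moving each multiplication in the first argument onto the second as a derivative, I obtain
\[ (p_\mu, p_\nu) = \mu_1 \mu_2 \cdots \mu_\ell \left(1,\; \pdv{}{p_{\mu_1}} \pdv{}{p_{\mu_2}} \cdots \pdv{}{p_{\mu_\ell}} p_\nu \right). \]
Since $p_\nu = \prod_k p_k^{m_k(\nu)}$, applying $m_k(\mu)$ partials in $p_k$ yields zero unless $m_k(\nu) \ge m_k(\mu)$ for every odd $k$, and taking the constant term then forces equality, i.e.\ $\mu = \nu$. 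In that case the right factor evaluates to $\prod_k m_k(\mu)!$, giving
\[ (p_\mu, p_\mu) = \prod_k k^{m_k(\mu)} m_k(\mu)! = z_\mu. \]

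The only real work lies in verifying the adjoint relation in the first step; once that is established, the rest is the standard integration-by-parts argument that computes the Hall inner product on power-sum symmetric functions, which is exactly the form that falls out here, consistent with the footnote in the statement.
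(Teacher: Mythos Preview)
Your argument is correct. The paper does not actually prove Theorem~\ref{thm:R-inner-prod}; it is stated as a known fact with a citation to \cite[Equation 14.12.6]{Kac}, following the remark that such an invariant form is unique by \cite[Proposition 9.4]{Kac}. So there is no proof in the paper to compare against.

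What you have written is essentially the standard derivation of that formula from the ingredients the paper does provide: the invariance identity $(Xu,v)=-(u,\omega(X)v)$, the normalization $(1,1)=1$, and the operator realizations $\alpha_k^*\mapsto p_k$, $\alpha_k\mapsto k\,\pdv{}{p_k}$ from Theorem~\ref{thm:basic-rep}. Your computation $\omega(\alpha_k^*)=-\alpha_k$ is correct since the matrix $\begin{pmatrix}0&1\\1&0\end{pmatrix}$ is symmetric, and the use of $d_0$ to separate degrees (hence to identify $(1,f)$ with the constant term of $f$) is exactly the right way to close the argument. The iterated ``integration by parts'' then gives both orthogonality and the value $z_\mu$ as you wrote.

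If you want to be fully self-contained, you might note explicitly that the grading statement at the end of Theorem~\ref{thm:basic-rep} is what guarantees $d_0$ acts on $R$ as the polynomial degree operator; this is implicit in the paper but worth one sentence.
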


If we expand the first exponential in (\ref{eq:vertex-operator}), we get a term $\frac{(-2)^\ell p_{k_1} \cdots p_{k_\ell}}{\ell!\cdot  k_1 \cdots k_\ell}z^{k_1+ \cdots + k_\ell}$ for every $\ell\ge 0$ and tuple $(k_1,\ldots, k_\ell)$ where the $k_i$ are odd positive integers. Moreover, if $\mu =(k_1, \ldots, k_\ell)$ is a partition with odd parts, there are $\frac{\ell!}{m_1(\mu)!m_3(\mu)!m_5(\mu)!\ \cdots}$ ways to reorder the tuple $(k_1, \ldots, k_\ell)$ to get the same term proportional to $p_\mu z^{|\mu|}$, so that the overall coefficient of $p_\mu z^{|\mu|}$ in the expansion is $\frac{(-1)^\ell 2^\ell}{z_\mu}=\frac{(-1)^{|\mu|} 2^\ell}{z_\mu}$. In other words, we have 

\begin{equation} \label{eq:expand-exp}
    \exp\left(-2 \sum_{\substack{k \ge 1\\\text{odd}}} \frac{p_k}{k} z^k\right) = \sum_{n\ge 0} q_n (-z)^n
\end{equation}

where

\begin{equation} \label{eq:Qn-sum}
    q_n=\sum_{\substack{\mu \vdash n\\\text{with odd}\\ \text{parts}}} \frac{2^{\ell(\mu)}}{z_\mu}p_\mu
\end{equation}

Given $f \in \CC[p_1, p_3, p_5, \ldots]$, we let $f^*$ denote the adjoint of the multiplication by $f$ operator (so that $(fv,w)=(v,f^*w)$ for all $v,w \in \CC[p_1, p_3, p_5, \ldots]$). From Theorem \ref{thm:R-inner-prod}, we can see that $p_k^*= k\pdv{}{p_k}$, so we can rewrite (\ref{eq:vertex-operator}) as

\begin{align}
    \sum_{j\in \ZZ} z^{-j} A_j &= \frac{1}{2}\exp\left(-2 \sum_{\substack{k \ge 1\\\text{odd}}} \frac{p_k}{k} z^k\right)\exp\left(2 \sum_{\substack{k \ge 1\\\text{odd}}} \pdv{}{p_k}z^{-k}\right) \nonumber \\
    &= \frac{1}{2}\exp\left(-2 \sum_{\substack{k \ge 1\\\text{odd}}} \frac{p_k}{k} z^k\right)\exp\left(-2 \sum_{\substack{k \ge 1\\\text{odd}}}  \frac{p_k^*}{k}(-z^{-1})^k\right) \nonumber \\
    &= \frac{1}{2} \left(\sum_{n\ge 0} q_n(-z)^n\right)\left(\sum_{n\ge 0} q_n^*z^{-n}\right) \label{eq:sum-Qn-sum-Qn*}
\end{align}
        
where the last equality follows from (\ref{eq:expand-exp}) together with the equation obtained by taking the adjoint on both sides of (\ref{eq:expand-exp}) and substituting $z$ for $-z^{-1}$. We will make crucial use of equation (\ref{eq:sum-Qn-sum-Qn*}) later to prove Theorem \ref{thm:Delta-stable}.

\begin{remark} \label{rmk:symm-fun}
We defined the vector space $\Fc$ as an abstract vector space with basis indexed by partitions. There is actually a very important concrete vector space in algebraic combinatorics which also has a basis indexed by partitions, namely the algebra of symmetric functions in infinitely many variables $x_1,x_2, \ldots$ \cite{Mac}  \cite[Chapter 2]{GR}. We can identify $\Fc$ with this algebra by identifying the basis $\{\ket{\lam}\}$ with the basis of Schur functions $\{s_\lam\}$. Under this identification, the subspace $R$ gets a very nice interpretation: the $p_k$'s correspond to the power-sum symmetric functions $x_1^k+x_2^k+\cdots$ \cite{Lec}. The $q_n$'s also correspond to known symmetric functions, namely the Hall-Littlewood $q$-functions at $t=-1$ (compare (\ref{eq:Qn-sum}) with example 6a from \cite[Chapter 3.8]{Mac}). We will, however, not need these interpretations in the rest of the paper.
\end{remark}

\section{Study of the basic representation over $\ZZ_{(2)}$} \label{sec:basic-rep-Z2}

Recall that $\ZZ_{(2)}=\{\frac{a}{b}: a,b \in \ZZ, 2\nmid b\}$ is the ring of rational numbers with odd denominator (i.e. the localization of $\ZZ$ at the prime ideal $(2)$). In this section, we study the $\ZZ_{(2)}$-module generated by the action of the generators of $\g$ on the constant polynomial $1\in \CC[p_1, p_3, p_5, \ldots]$. That is, we give an explicit description of $\ZZ_{(2)}[e_0,e_1,f_0,f_1]\cdot 1$, where $\ZZ_{(2)}[e_0,e_1,f_0,f_1]\subset U(\g)$ is the sub-$\ZZ_{(2)}$-algebra of $U(\g)$ generated by $e_0,e_1,f_0, f_1$.

\begin{definition} 
    We define $\Delta = \mathbb{Z}_{(2)}[2^k  p_{2k+1} : k \ge 0] = \mathbb{Z}_{(2)}[p_1, 2p_3, 4p_5, \ldots]$. As a $\ZZ_{(2)}$-module, this has a basis given by $2^\frac{|\mu|-\ell(\mu)}{2} p_\mu$, where $\mu$ runs over all partitions with odd parts.
\end{definition}

\subsection{Stability of $\Delta$}

Let us introduce the notation
\begin{equation*}
\varepsilon(n) = 
\begin{cases}
    0 \quad &\text{if $n$ is even} \\
    1 \quad &\text{if $n$ is odd}
\end{cases},
\end{equation*}
which will be convenient in what follows.

In order to prove the main result of this section, Theorem \ref{thm:Delta-stable}, we will need the following two lemmas.

\begin{lemma} \label{lem:Qn-Delta}
For $n\ge 1$, the "multiplication by $q_n$" operator sends $\Delta$ to $2^{-\frac{n+\eps(n)-4}{2}}\Delta$
\end{lemma}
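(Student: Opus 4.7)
The plan is to leverage the fact that $\Delta$ is a $\ZZ_{(2)}$-subalgebra of $\CC[p_1,p_3,p_5,\ldots]$ (by definition, it is a polynomial ring). In particular $\Delta$ is closed under multiplication and contains $1$, so the statement of the lemma is equivalent to the single condition $q_n\in 2^{-(n+\eps(n)-4)/2}\Delta$. The whole problem thus boils down to expanding $q_n$ in the given basis of $\Delta$ and bounding $2$-adic valuations of the resulting coefficients.

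Concretely, let $b_\mu:=2^{(|\mu|-\ell(\mu))/2}p_\mu$ denote the basis elements of $\Delta$. Substituting $p_\mu=2^{-(n-\ell(\mu))/2}b_\mu$ into formula (\ref{eq:Qn-sum}) gives
$$q_n\;=\;\sum_{\substack{\mu\vdash n\\\mu\text{ odd}}}\frac{2^{(3\ell(\mu)-n)/2}}{z_\mu}\,b_\mu.$$
Since every part of $\mu$ is odd, $\ell(\mu)\equiv n\pmod 2$, so the exponent above is an integer with the same parity as $\eps(n)$. After multiplying by $2^{(n+\eps(n)-4)/2}$, the required membership $q_n\in 2^{-(n+\eps(n)-4)/2}\Delta$ is equivalent to the single inequality
$$v_2(z_\mu)\;\le\;\frac{3\ell(\mu)+\eps(n)-4}{2}$$
for every partition $\mu\vdash n$ with odd parts, which I would verify by a direct $2$-adic estimate.

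For that estimate, since $z_\mu=\prod_k k^{m_k}m_k!$ has only odd factors $k$, we have $v_2(z_\mu)=\sum_k v_2(m_k!)$. Legendre's formula $v_2(m!)=m-s_2(m)$ yields $v_2(m!)\le m-1$ for $m\ge 1$, hence
$$v_2(z_\mu)\;\le\;\sum_{k\,:\,m_k\ge 1}(m_k-1)\;=\;\ell(\mu)-d(\mu)\;\le\;\ell(\mu)-1,$$
where $d(\mu)\ge 1$ is the number of distinct parts. The condition $\ell(\mu)-1\le(3\ell(\mu)+\eps(n)-4)/2$ rearranges to $\ell(\mu)\ge 2-\eps(n)$, and this follows in every case from $n\ge 1$ together with the parity constraint: if $n$ is odd then $\ell(\mu)\ge 1=2-\eps(n)$, and if $n$ is even then $\ell(\mu)$ is a positive even integer, hence $\ge 2=2-\eps(n)$.

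I do not anticipate any genuine obstacle—the argument is essentially a careful bookkeeping exercise with powers of $2$. The slightly subtle point worth noting is that the bound is \emph{saturated} in the case $\mu=(n)$ with $n$ odd, where $v_2(z_{(n)})=0$ and the right-hand side of the inequality is also $0$; this extremal case is precisely what forces the constant $-4$ appearing in the exponent in the statement of the lemma.
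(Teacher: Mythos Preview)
Your proof is correct and follows essentially the same approach as the paper: both reduce to showing $q_n\in 2^{-(n+\eps(n)-4)/2}\Delta$ via closure of $\Delta$ under multiplication, then verify the required $2$-adic inequality using $v_2(z_\mu)\le \ell(\mu)-1$ (from Legendre's formula) together with $\ell(\mu)\ge 2-\eps(n)$. The only difference is cosmetic rearrangement of the same inequality, and your closing remark on sharpness at $\mu=(n)$ is a nice addition.
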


\begin{proof}
We check that each term $\frac{2^{\ell(\mu)}}{z_\mu}p_\mu$ in the decomposition (\ref{eq:Qn-sum}) for $q_n$ sends $\Delta$ to $2^{-\frac{n+\eps(n)-4}{2}}\Delta$, where $\mu$ is a partition of $n$ with odd parts.
Since $\Delta$ is closed under multiplication, it suffices to verify that $\frac{2^{\ell(\mu)}}{z_\mu}p_\mu \in 2^{-\frac{n+\eps(n)-4}{2}}\Delta$, i.e. that
\begin{equation} \label{eq:v2-inequality}
\ell(\mu) - v_2(z_\mu) \ge \frac{n-\ell(\mu)}{2} - \frac{n+\eps(n)-4}{2} = \frac{4-\ell(\mu)-\eps(n)}{2}.
\end{equation}

where $v_2(x)$ denotes the $2$-adic valuation of $x$, i.e. the exponent of $2$ in the prime factorization of $x$.

Using the inequality $v_2(m!)\le m-1$ for $m\ge 1$ (which is well-known and follows from Legendre's formula \cite[Theorem 2.6.4]{Mol}), we have that

\[v_2(z_\mu) = \sum_{k\in S} v_2(m_k(\mu)!) \le \sum_{k\in S} (m_k(\mu)-1) = \ell(\mu)-|S| \le \ell(\mu)-1 \]
where $S$ is the set of parts of $\mu$ (ignoring multiplicities). Hence the left-hand side of (\ref{eq:v2-inequality}) is at least $1$.

Moreover, we clearly have $\ell(\mu)\ge 2-\eps(n)$, from which we get that the right-hand side of (\ref{eq:v2-inequality}) is at most $1$, which concludes the proof.

\end{proof}

\begin{lemma} \label{lem:Qn-star-Delta}
For $n\ge 1$, the operator $q_n^*$ sends $\Delta$ to $2^ \frac{n-\eps(n)+2}{2}\Delta$
\end{lemma}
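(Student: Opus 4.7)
The plan is to mirror the proof of Lemma \ref{lem:Qn-Delta}, with the multiplication operator $p_\mu$ replaced by the differential operator $p_\mu^*$. Taking adjoints termwise in the decomposition (\ref{eq:Qn-sum}) gives $q_n^* = \sum_{\mu} \frac{2^{\ell(\mu)}}{z_\mu} p_\mu^*$, where $\mu$ runs over partitions of $n$ into odd parts. Since $\Delta$ is a $\ZZ_{(2)}$-module, it suffices to show $\frac{2^{\ell(\mu)}}{z_\mu} p_\mu^*\,\Delta \subseteq 2^{(n-\eps(n)+2)/2}\Delta$ for each such $\mu$ individually.

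Using the description $p_k^* = k\,\partial/\partial p_k$ from Theorem \ref{thm:basic-rep}, I would apply $p_\mu^* = \prod_k (k\,\partial/\partial p_k)^{m_k(\mu)}$ to a basis element $2^{(|\lambda|-\ell(\lambda))/2}\, p_\lambda$ of $\Delta$ (with $\lambda$ having odd parts). A direct partial-derivative computation gives
\[p_\mu^*\, p_\lambda \;=\; \Bigl(\prod_k k^{m_k(\mu)}\,\frac{m_k(\lambda)!}{(m_k(\lambda)-m_k(\mu))!}\Bigr)\, p_{\lambda-\mu}\]
when $m_k(\mu) \le m_k(\lambda)$ for every $k$ (and $0$ otherwise), where $\lambda-\mu$ denotes the multiset difference. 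Since each part $k$ of $\mu$ is odd, $\prod_k k^{m_k(\mu)}$ is a $2$-adic unit and contributes nothing to $v_2$.

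Translating the desired containment into a $2$-adic valuation inequality for the coefficient of $p_{\lambda - \mu}$, substituting $|\lambda-\mu| = |\lambda|-n$ and $\ell(\lambda-\mu) = \ell(\lambda)-\ell(\mu)$, and applying Legendre's formula $v_2(m!) = m - s_2(m)$ (with $s_2$ the base-$2$ digit sum) to expand both $v_2(z_\mu)$ and the valuations of the falling factorials, the inequality should reduce after cancellation to
\[\frac{\ell(\mu) + \eps(n) - 2}{2} \;+\; \sum_k \bigl(s_2(m_k(\mu)) + s_2(m_k(\lambda)-m_k(\mu)) - s_2(m_k(\lambda))\bigr) \;\ge\; 0.\]
Each bracketed summand is nonnegative, being the number of carries in the base-$2$ addition of $m_k(\mu)$ and $m_k(\lambda)-m_k(\mu)$ (Kummer's theorem). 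The leading term is nonnegative because $\ell(\mu) + \eps(n) \ge 2$: if $n$ is odd then $\eps(n) = 1$ and $\ell(\mu) \ge 1$, and if $n$ is even then any partition of $n$ into odd parts has an even number of parts, forcing $\ell(\mu) \ge 2$.

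The main obstacle here is not any deep idea but the $2$-adic bookkeeping: one must carefully combine the contributions to $v_2$ coming from $z_\mu$, from the falling factorial produced by $p_\mu^*$, and from the scaling exponents $(|\lambda|-\ell(\lambda))/2$ and $(|\lambda-\mu|-\ell(\lambda-\mu))/2$ that define the basis of $\Delta$ before and after the action. Once these cancellations are performed, the remaining inequality is tight but just barely nonnegative, which is exactly why the factor $2^{(n-\eps(n)+2)/2}$ (and nothing larger) appears in the statement.
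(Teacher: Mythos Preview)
Your proof is correct and follows essentially the same route as the paper: decompose $q_n^*$ termwise, apply each $\frac{2^{\ell(\mu)}}{z_\mu}p_\mu^*$ to a $\Delta$-basis element, and reduce to the inequality $\ell(\mu)\ge 2-\eps(n)$ together with the nonnegativity of $v_2(C'/z_\mu)$. The only cosmetic difference is that the paper observes directly that $C'/z_\mu$ is (up to odd factors) a product of binomial coefficients $\binom{m_k(\mu)+m_k(\nu)}{m_k(\nu)}$ and hence lies in $\ZZ_{(2)}$, whereas you unwind the same fact through Legendre's formula and Kummer's theorem; these are two phrasings of the identical inequality $s_2(a)+s_2(b)-s_2(a+b)\ge 0$.
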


\begin{proof}
By linearity, it suffices to check that \[\left(\frac{2^{\ell(\mu)}}{z_\mu}p_\mu^*\right) \cdot \left(2^\frac{|\tau|-\ell(\tau)}{2} p_\tau \right) \in 2^ \frac{n-\eps(n)+2}{2}\Delta \]
 for any partitions $\mu, \tau$ with odd parts where $\mu$ has size $n$. Using $p_k^*=k\pdv{}{p_k}$, we see that the left-hand side is zero unless $\tau=\mu \sqcup \nu$ for some $\nu$, where $\mu \sqcup \nu$ is the partition with parts $\mu_1, \mu_2, \ldots, \nu_1, \nu_2, \ldots$ (i.e. $p_\tau = p_\mu p_\nu$). In that case, we have $p_\mu^*(p_\mu p_\nu)=Cp_\nu$, where $C = \displaystyle\prod_k \frac{(m_k(\mu)+m_k(\nu))!}{m_k(\nu)!}$.
 
Note that 
\[v_2\left(\frac{C}{z_\mu}\right) = \sum_k v_2\left(\frac{(m_k(\mu)+m_k(\nu))!}{m_k(\nu)!m_k(\mu)!}\right)\ge 0\]
since $\frac{(m_k(\mu)+m_k(\nu))!}{m_k(\nu)!m_k(\mu)!}={m_k(\mu)+m_k(\nu) \choose m_k(\nu)}$ is an integer. Hence we have
\[ \left(\frac{2^{\ell(\mu)}}{z_\mu}p_\mu^*\right) \cdot \left(2^\frac{|\tau|-\ell(\tau)}{2} p_\tau \right) = 2^{\ell(\mu)}\cdot 2^\frac{|\mu|-\ell(\mu)}{2} \cdot 2^\frac{|\nu|-\ell(\nu)}{2} \cdot \frac{p_\mu^*(p_\mu p_\nu)}{z_\mu} = 2^\frac{n+\ell(\mu)}{2} \cdot 2^\frac{|\nu|-\ell(\nu)}{2} \cdot \frac{Cp_\nu}{z_\mu} \in 2^\frac{n+\ell(\mu)}{2}\Delta\]
as $\frac{C}{z_\mu}\in \ZZ_{(2)}$ and $2^\frac{|\nu|-\ell(\nu)}{2} p_\nu\in \Delta$.
The lemma then follows from $\ell(\mu) \ge 2-\eps(n)$.
\end{proof}

\begin{theorem} \label{thm:Delta-stable}
The module $\Delta$ is stable under the action of the generators $e_0, e_1, f_0, f_1$, i.e. $e_0 v \in \Delta$ for all $v\in \Delta$, and similarly for $e_1,f_0,f_1$.
\end{theorem}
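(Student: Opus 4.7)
The plan is to express each Chevalley generator in terms of $\alpha_1 = \pdv{}{p_1}$, $\alpha_1^{*} = p_1$ and the vertex operators $A_1, A_{-1}$, and then invoke Lemmas \ref{lem:Qn-Delta} and \ref{lem:Qn-star-Delta} to control the $2$-adic behaviour of the remaining tail. Directly from the matrix definitions one checks that $\alpha_1 + A_1 = 2e_0$, $\alpha_1 - A_1 = 2e_1$, $\alpha_1^* - A_{-1} = 2f_0$, and $\alpha_1^* + A_{-1} = 2f_1$. Naively this yields expressions for the $e_i, f_i$ carrying a prefactor $\frac{1}{2}$, which is not in $\ZZ_{(2)}$. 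The point of the proof is to exhibit a cancellation that removes this $\frac{1}{2}$.

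To find the cancellation, I would read off $A_{\pm 1}$ as the coefficients of $z^{\mp 1}$ in equation (\ref{eq:sum-Qn-sum-Qn*}):
\[
A_1 = \frac{1}{2}\sum_{n \ge 0}(-1)^n q_n q_{n+1}^*, \qquad A_{-1} = \frac{1}{2}\sum_{n \ge 1}(-1)^n q_n q_{n-1}^*.
\]
A quick computation from (\ref{eq:Qn-sum}) gives $q_0 = 1$ and $q_1 = 2p_1$, so $q_1^* = 2\pdv{}{p_1}$, whence the $n=0$ term of $A_1$ is exactly $\alpha_1$ and the $n=1$ term of $A_{-1}$ is exactly $-\alpha_1^*$. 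Substituting back into the formulas above, the $\frac{1}{2}\alpha_1$ and $\frac{1}{2}\alpha_1^*$ terms cancel, leaving
\[
e_0 = \pdv{}{p_1} + \frac{1}{4}\sum_{n \ge 1}(-1)^n q_n q_{n+1}^*, \qquad e_1 = -\frac{1}{4}\sum_{n \ge 1}(-1)^n q_n q_{n+1}^*,
\]
\[
f_0 = p_1 - \frac{1}{4}\sum_{n \ge 2}(-1)^n q_n q_{n-1}^*, \qquad f_1 = \frac{1}{4}\sum_{n \ge 2}(-1)^n q_n q_{n-1}^*.
\]

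The remaining step is to bound the tails. Combining Lemmas \ref{lem:Qn-Delta} and \ref{lem:Qn-star-Delta} and using that $\eps(n) + \eps(n \pm 1) = 1$, a short calculation shows that $q_n q_{n+1}^*$ maps $\Delta$ into $2^3 \Delta$ for every $n \ge 1$ and that $q_n q_{n-1}^*$ maps $\Delta$ into $2^2 \Delta$ for every $n \ge 2$. The $\frac{1}{4}$ prefactor then puts each tail back into $\Delta$. The two remaining building blocks, multiplication by $p_1$ and differentiation $\pdv{}{p_1}$, both preserve $\Delta$: this is immediate from the basis $\{2^{(|\mu|-\ell(\mu))/2} p_\mu\}$, since adjoining or removing a part of size $1$ keeps $|\mu|-\ell(\mu)$ unchanged.

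The main obstacle is the cancellation step: without noticing that the first terms of the vertex operator expansions of $A_1$ and $A_{-1}$ coincide with $\alpha_1$ and $-\alpha_1^*$ respectively, one is left with $\frac{1}{2}\alpha_1$ or $\frac{1}{2}\alpha_1^*$ terms that are not defined over $\ZZ_{(2)}$. A minor additional check is that the infinite sums above make sense operator-wise when applied to an element of $\Delta$: this follows because, for each fixed polynomial $v$, only finitely many $q_m^*$ survive (as $q_m^*$ strictly lowers degree for $m \ge 1$).
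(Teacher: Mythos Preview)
Your proof is correct and is essentially the same as the paper's: you express the Chevalley generators via $\alpha_1,\alpha_1^*,A_{\pm 1}$, extract the $n=0$ (resp.\ $n=1$) term of the vertex-operator expansion to cancel the $\tfrac12$, and then use Lemmas~\ref{lem:Qn-Delta} and~\ref{lem:Qn-star-Delta} to see that the remaining tails land in $4\Delta$ (for the $f_i$) and $8\Delta$ (for the $e_i$). The only difference is presentational---you spell out the cancellation of the $\tfrac12 p_1$ and $\tfrac12\pdv{}{p_1}$ terms explicitly, whereas the paper absorbs it silently when passing from the $n\ge 0$ sum to the $n\ge 1$ sum.
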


\begin{proof}
We first treat the case of $f_0$ and $f_1$.
From Theorem \ref{thm:basic-rep}, $f_0+f_1=\alpha_1^*$ acts on $R$ by multiplication by $p_1$, while $f_1-f_0=A_{-1}$ acts as the coefficient of $z$ in (\ref{eq:sum-Qn-sum-Qn*}), which 
is $\frac{1}{2} \displaystyle\sum_{n=0}^\infty (-1)^{n+1} q_{n+1}q_{n}^*$. Hence we have
\begin{align*}
    f_0 = \frac{\alpha_1^* - A_{-1}}{2} &= p_1 - \frac{1}{4}\displaystyle\sum_{n=1}^\infty (-1)^{n+1} q_{n+1}q_{n}^* \\
    f_1 = \frac{\alpha_1^* + A_{-1}}{2} &= \frac{1}{4}\displaystyle\sum_{n=1}^\infty (-1)^{n+1} q_{n+1}q_{n}^*
\end{align*}

Combining Lemmas \ref{lem:Qn-Delta} and \ref{lem:Qn-star-Delta}, we see that, for $n \ge 1$, the operator $q_{n+1}q_n^*$ sends $\Delta$ to $$2^{-\frac{n+1+\eps(n+1)-4}{2}}2^\frac{n-\eps(n)+2}{2}\Delta = 4 \Delta$$
which shows that $f_0$ and $f_1$ preserve $\Delta$.

Similarly, we have

\begin{align*}
    e_0 = \frac{\alpha_1 + A_{1}}{2} &= p_1^* - \frac{1}{4}\displaystyle\sum_{n=1}^\infty (-1)^{n+1} q_{n}q_{n+1}^* \\
    e_1 = \frac{\alpha_1 - A_{1}}{2} &= \frac{1}{4}\displaystyle\sum_{n=1}^\infty (-1)^{n+1} q_{n}q_{n+1}^*
\end{align*}

For $n \ge 1$, the operator $q_{n}q_{n+1}^*$ sends $\Delta$ to $$2^{-\frac{n+\eps(n)-4}{2}}2^\frac{n+1-\eps(n+1)+2}{2}\Delta = 8 \Delta$$

which shows that $e_0$ and $e_1$ preserve $\Delta$ (using that $p_1^*$ also clearly preserves $\Delta$).

\end{proof}

Theorem \ref{thm:Delta-stable} show that $\ZZ_{(2)}[e_0,e_1,f_0,f_1]\cdot 1 \subset \Delta$, where $1$ here represents the constant polynomial in $R=\CC[p_1,p_3, \ldots]$. The reverse inclusion also holds:

\begin{theorem} \label{thm:Delta-gen}
    $\ZZ_{(2)}[f_0,f_1]\cdot 1 =\ZZ_{(2)}[e_0,e_1,f_0,f_1]\cdot 1 = \Delta$
\end{theorem}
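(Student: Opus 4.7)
The plan is to apply Nakayama's lemma over the local ring $\ZZ_{(2)}$, working one graded piece at a time. Theorem \ref{thm:Delta-stable} already gives
\[\ZZ_{(2)}[f_0,f_1]\cdot 1 \;\subseteq\; \ZZ_{(2)}[e_0,e_1,f_0,f_1]\cdot 1 \;\subseteq\; \Delta,\]
so the only nontrivial thing I would need to establish is the reverse inclusion $\Delta \subseteq \ZZ_{(2)}[f_0,f_1]\cdot 1$. Setting $N := \ZZ_{(2)}[f_0,f_1]\cdot 1$, I would first observe that $N$ and $\Delta$ are both graded $\ZZ_{(2)}$-submodules of $R$: the formulas for $f_0,f_1$ displayed in the proof of Theorem \ref{thm:Delta-stable} exhibit each as a homogeneous operator of degree $+1$ (multiplication by $p_1$ and all the $q_{n+1}q_n^*$ shift polynomial degree by $+1$). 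It therefore suffices to show $\Delta_n \subseteq N_n$ for each $n$, and in that degree both are finitely generated $\ZZ_{(2)}$-modules, sitting inside the finite-rank space $R_n$.

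The key observation I would exploit is that $\Delta$ was engineered so that $\Delta_n/2\Delta_n$ is one-dimensional over $\mathbb{F}_2$. Indeed, in the basis $\{2^{(n-\ell(\mu))/2}p_\mu : \mu \vdash n,\ \mu \text{ with odd parts}\}$ of $\Delta_n$, any basis element whose $2$-power coefficient is at least $2^1$ vanishes modulo $2$, so only $\mu$ with $\ell(\mu)=n$ can contribute; combined with the oddness of the parts this forces $\mu = (1^n)$, giving $\Delta_n / 2\Delta_n = \mathbb{F}_2 \cdot \overline{p_1^n}$.

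To finish, I would use that $\alpha_1^* = f_0 + f_1$ acts on $R$ as multiplication by $p_1$ (Theorem \ref{thm:basic-rep}), so $p_1^n = (f_0+f_1)^n\cdot 1$ lies in $N_n$, and its image in $\Delta_n/2\Delta_n$ is precisely the generator. Hence $N_n + 2\Delta_n = \Delta_n$, and Nakayama's lemma (applied to the finitely generated $\ZZ_{(2)}$-module $\Delta_n$ with the maximal ideal $(2)$) forces $N_n = \Delta_n$. Summing over $n$ yields $N = \Delta$, and the equality $\ZZ_{(2)}[e_0,e_1,f_0,f_1]\cdot 1 = \Delta$ is then automatic from the sandwich $N \subseteq \ZZ_{(2)}[e_0,e_1,f_0,f_1]\cdot 1 \subseteq \Delta = N$.

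The step I expect to be the main obstacle is the conceptual one of spotting the right quotient: once one notices that $\Delta/2\Delta$ collapses to the rank-one module $\mathbb{F}_2[\overline{p_1}]$, the rest is a one-line Nakayama reduction using the already-established identity $\alpha_1^* = f_0 + f_1$. A direct attempt to realize each generator $2^k p_{2k+1}$ explicitly as a $\ZZ_{(2)}$-combination of words $f_{i_n}\cdots f_{i_1}\cdot 1$ becomes unpleasant quickly; even at degree $3$ one has to compute the two relevant nonzero words and check via a determinant (with odd denominator) that they span all of $\Delta_3$ over $\ZZ_{(2)}$.
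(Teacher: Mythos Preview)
Your Nakayama argument has a genuine gap: the claim that $\Delta_n/2\Delta_n$ is one-dimensional over $\mathbb{F}_2$ is false. The error is in asserting that a basis element $2^{(n-\ell(\mu))/2}p_\mu$ with $(n-\ell(\mu))/2 \ge 1$ ``vanishes modulo $2$''. That element would die in $\Delta_n/2\Delta_n$ only if $2^{(n-\ell(\mu))/2-1}p_\mu$ already lay in $\Delta_n$, but it does not: the whole point of the definition of $\Delta$ is that $p_\mu$ is only allowed with coefficient divisible by $2^{(|\mu|-\ell(\mu))/2}$. Concretely, for $n=3$ the module $\Delta_3$ is free of rank $2$ over $\ZZ_{(2)}$ on $\{p_1^3,\,2p_3\}$, so $\Delta_3/2\Delta_3 \cong \mathbb{F}_2^2$; the image of $2p_3$ is nonzero because $p_3\notin\Delta_3$. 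In general $\dim_{\mathbb{F}_2}\Delta_n/2\Delta_n$ equals the number of partitions of $n$ into odd parts, so producing the single vector $p_1^n=(f_0+f_1)^n\cdot 1$ is nowhere near enough to invoke Nakayama.

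The paper avoids this by working at the level of operators rather than module elements: it shows the iterated-commutator identities
\[
(\mathrm{ad}_{f_0}\mathrm{ad}_{f_1})^m f_0 \;+\; (\mathrm{ad}_{f_1}\mathrm{ad}_{f_0})^m f_1 \;=\; 2^m\,\alpha_{2m+1}^*,
\]
so that multiplication by $2^m p_{2m+1}$ lies in $\ZZ_{(2)}[f_0,f_1]\subset U(\g)$ for every $m\ge 0$. This makes $\ZZ_{(2)}[f_0,f_1]\cdot 1$ closed under multiplication by each generator $2^m p_{2m+1}$ of $\Delta$, hence $\supseteq \Delta$. If you wanted to rescue a Nakayama-style proof you would still need to produce, for each $n$, enough words $f_{i_n}\cdots f_{i_1}\cdot 1$ to span all of $\Delta_n/2\Delta_n$; that is essentially the same amount of work as the direct commutator computation, and your final paragraph already hints at why doing it degree by degree is unpleasant.
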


\begin{proof}
From Theorem \ref{thm:Delta-stable}, we have the inclusions
 \[\ZZ_{(2)}[f_0,f_1]\cdot 1 \subset \ZZ_{(2)}[e_0,e_1,f_0,f_1]\cdot 1 \subset \Delta,\]
 so it remains to show that
 \[ \Delta \subset \ZZ_{(2)}[f_0,f_1]. \]
One can easily check that, for $k\ge 1$ odd,
\[ \left[f_0, \left[f_1, t^{-k}\begin{pmatrix} 0 & 1 \\ 0 & 0 \end{pmatrix}\right]\right] = 2t^{-k-2}\begin{pmatrix} 0 & 1 \\ 0 & 0 \end{pmatrix},\]
\[ \left[f_1, \left[f_0, t^{-k}\begin{pmatrix} 0 & 0 \\ 1 & 0 \end{pmatrix}\right]\right] = 2t^{-k-2}\begin{pmatrix} 0 & 0 \\ 1 & 0 \end{pmatrix},\]
so by induction it follows that
\[ 2^mt^{-(2m+1)}\begin{pmatrix} 0 & 1 \\ 0 & 0 \end{pmatrix} = (\text{ad}_{f_0}\text{ad}_{f_1})^m f_0 \in \ZZ_{(2)}[f_0,f_1],\]
\[ 2^mt^{-(2m+1)}\begin{pmatrix} 0 & 0 \\ 1 & 0 \end{pmatrix} = (\text{ad}_{f_1}\text{ad}_{f_0})^m f_1 \in \ZZ_{(2)}[f_0,f_1].\]
Adding these two together, we find that
\[2^m\alpha_{2m+1}^*= 2^mt^{-(2m+1)}\begin{pmatrix} 0 & 1 \\ 1 & 0 \end{pmatrix}\in \ZZ_{(2)}[f_0,f_1].\]
We see therefore that $\ZZ_{(2)}[f_0,f_1]\cdot 1$ is closed under multiplication by $2^mp_{2m+1}$ for any $m\ge 0$, so it clearly contains $\Delta$.
\end{proof}

\subsection{The inner product on $\Delta$}

The last missing ingredient for our proof of Theorem \ref{thm:v2sumsquares} is to show that the inner product of two elements of $\Delta$ is highly divisible by $2$.

\begin{theorem} \label{thm:Delta-inner-prod}
For $f, g \in \Delta$ of degree $n$, we have $\langle f,g \rangle \in 2^{n-a(n)}\mathbb{Z}_{(2)}$
\end{theorem}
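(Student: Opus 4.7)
The plan is to exploit the orthogonality in Theorem \ref{thm:R-inner-prod} to reduce the statement to a pointwise estimate on each basis element of $\Delta$, and then to convert that estimate into a combinatorial inequality about binary expansions. Since the degree $n$ component of $\Delta$ has $\ZZ_{(2)}$-basis $\{2^{(n-\ell(\mu))/2}p_\mu : \mu \vdash n,\ \mu \text{ has odd parts}\}$ and the $p_\mu$ are mutually orthogonal, I would write $f = \sum_\mu a_\mu \cdot 2^{(n-\ell(\mu))/2} p_\mu$ and $g = \sum_\mu b_\mu \cdot 2^{(n-\ell(\mu))/2} p_\mu$ with $a_\mu, b_\mu \in \ZZ_{(2)}$, so that
\[
\langle f, g \rangle \;=\; \sum_{\mu} a_\mu b_\mu \cdot 2^{n-\ell(\mu)}\, z_\mu.
\]
Since $v_2(a_\mu b_\mu) \ge 0$, the theorem reduces to proving that $\ell(\mu) - v_2(z_\mu) \le a(n)$ for every partition $\mu \vdash n$ with odd parts.

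To handle this estimate, I would use Legendre's formula in its sharp form $v_2(m!) = m - s_2(m)$, where $s_2(m)$ denotes the number of $1$'s in the binary expansion of $m$. Because every part $k$ of $\mu$ is odd, formula (\ref{eq:z-def}) gives $v_2(z_\mu) = \sum_{k \text{ odd}} v_2(m_k!) = \sum_k \bigl( m_k - s_2(m_k) \bigr)$, and since $\ell(\mu) = \sum_k m_k$ we obtain the clean identity
\[
\ell(\mu) - v_2(z_\mu) \;=\; \sum_{k \text{ odd}} s_2(m_k).
\]
So the remaining task is purely combinatorial: show $\sum_{k \text{ odd}} s_2(m_k(\mu)) \le a(n)$ whenever $\mu$ is an odd-part partition of $n$.

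The step I expect to carry the main content is the following observation, which links binary expansions of the multiplicities to distinct-parts partitions of $n$. For each odd part $k$, write $m_k = \sum_i 2^{b_{k,i}}$ in binary with distinct exponents $b_{k,i}\ge 0$. Then
\[
n \;=\; \sum_{k \text{ odd}} k\, m_k \;=\; \sum_{k,i} k\cdot 2^{b_{k,i}},
\]
and by the uniqueness of the decomposition of a positive integer as (odd)$\cdot 2^{\text{(power)}}$, the summands $k \cdot 2^{b_{k,i}}$ are pairwise distinct positive integers. Hence $n$ is expressed as a sum of exactly $\sum_k s_2(m_k)$ distinct positive integers. Since $r$ distinct positive integers sum to at least $\binom{r+1}{2}$, we must have $\binom{r+1}{2} \le n$ for $r = \sum_k s_2(m_k)$, which by the definition of $a(n)$ is precisely the inequality $r \le a(n)$. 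Combining everything yields the desired bound $v_2(\langle f, g\rangle) \ge n - a(n)$.
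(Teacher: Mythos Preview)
Your proof is correct and follows essentially the same route as the paper: both reduce via orthogonality of the $p_\mu$ to the inequality $\ell(\mu)-v_2(z_\mu)\le a(n)$, apply Legendre's formula $v_2(m!)=m-s_2(m)$ to rewrite the left side as $\sum_k s_2(m_k)$, and then observe that the binary expansions of the $m_k$ yield a distinct-parts partition of $n$ (the paper names this explicitly as the classical odd-parts/distinct-parts bijection), whence the triangular-number bound gives $r\le a(n)$.
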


\begin{proof}
Note that $\Delta_n$ (the elements of degree $n$ in $\Delta$) has a $\mathbb{Z}_{(2)}$-basis consisting of all $2^\frac{n-\ell(\mu)}{2}p_\mu$ for $\mu$ a partition of $n$ with odd parts. Since the $p_\mu$ are pairwise orthogonal, it suffices to show that $$\langle 2^\frac{n-\ell(\mu)}{2}p_\mu, 2^\frac{n-\ell(\mu)}{2}p_\mu \rangle \in 2^{n-a(n)}\mathbb{Z}_{(2)}$$ for every $\mu$ with odd parts, that is,
$$n-\ell(\mu)+v_2(z_\mu)\ge n-a(n),$$
or equivalently
$$\ell(\mu)-v_2(z_\mu)\le a(n).$$

To do this, consider the following classical bijection between partitions of $n$ with odd parts and partitions of $n$ with distinct parts. Given a partition $\mu$ with odd parts and an odd positive integer $k$, let $m_k=m_k(\mu)$ be the multiplicity of $k$ in $\mu$ and let $m_k=2^{r_{k,1}}+\cdots+2^{r_{k,b(m_k)}}$ be the binary expansion of $m_k$ (where $b(m_k)$ is the number of terms in this expansion, or equivalently the number of ones when $m_k$ is written in binary). Let $\nu$ be the partition having parts $2^{r_{k,i}}k$, where $k$ runs over odd positive integers and $i$ runs from $1$ to $b(m_k)$. Then $\nu$ is a partition of $n$ with distinct parts, and it is not hard to check that this process can be reversed, so that it gives a bijection between partitions of $n$ with odd parts and with distinct parts.

Note that the number of parts of $\nu$ is equal to $\displaystyle\sum_k b(m_k)$. On the other hand, from the definition of $z_\mu$, we have that
$$v_2(z_\mu)=\sum_k v_2(m_k!)= \sum_k \left( m_k-b(m_k) \right)=\ell(\mu)-\sum_k b(m_k),$$
where we used Legendre's formula according to which $v_2(m!)=m-b(m)$ \cite[Theorem 2.6.4]{Mol}. Therefore, if $\nu$ corresponds to $\mu$ under the described bijection, then we have
$$\ell(\mu)-v_2(z_\mu)=\ell(\nu).$$
The problem is thus reduced to showing that $\ell(\nu)\le a(n)$ whenever $\nu$ is a partition of $n$ with distinct parts. But this is clear since, if $\nu$ has distinct parts, then $n = \nu_1 + \nu_2 + \cdots + \nu_{\ell(\nu)} \ge 1 + 2 + \cdots + \ell(\nu) = \frac{\ell(\nu)(\ell(\nu)+1)}{2}$. Since $a(n)$ is the largest integer $m$ such that $\frac{m(m+1)}{2}\le n$, it follows that $\ell(\nu)\le a(n)$.
\end{proof}

Our theorem on Young tableaux with fixed $2$-residue sequence is now an easy corollary of the results established so far: 

\begin{proof}[Proof of Theorem \ref{thm:v2sumsquares}]
Recall that, for $v \in (\ZZ/2\ZZ)^n$ and $\lam$ a partition of $n$, $C_2(v, \lam)$ denotes the number of standard Young tableaux of shape $\lam$ with $2$-residue sequence $v$.
We need to show that, given a positive integer $n$ and $v,w \in \{0,1\}^n$, the sum
$$\sum_{\lam \vdash n} C_2(v, \lam) C_2(w, \lam)$$
is divisible by $2^{n-a(n)}$.

Let $v=(i_1,i_2,\ldots,i_n)$ and $w=(j_1,j_2,\ldots,j_n)$. It is easy to see from the description of the action of $f_i$ on $\Fc$ by adding cells of residue $i$ that
$$\sum_{\lam \vdash n} C_2(v,\lam) \ket{\lam} = f_{i_n} \cdots f_{i_2}f_{i_1} \ket{\emptyset} \in \Delta$$
and similarly
$$\sum_{\lam \vdash n} C_2(w,\lam) \ket{\lam} = f_{j_n} \cdots f_{j_2}f_{j_1} \ket{\emptyset} \in \Delta,$$
using Theorem \ref{thm:Delta-stable}.
Therefore, taking the inner product, we find by Theorem \ref{thm:Delta-inner-prod} that
\begin{equation*}
\sum_{\lam \vdash n} C_2(v,\lam) C_2(w,\lam)=\left\langle \sum_{\lam \vdash n} C_2(v,\lam) \ket{\lam}, \sum_{\lam \vdash n} C_2(w,\lam) \ket{\lam} \right\rangle \in 2^{n-a(n)}\mathbb{Z}_{(2)}.
\end{equation*}
\end{proof}

\begin{remark}
It is easy to see from the proof that the bound in Theorem \ref{thm:Delta-inner-prod} is tight, i.e. the theorem is false if we replace $n-a(n)$ in the statement by any larger number. Since expressions of the form $f_{i_n} \cdots f_{i_2}f_{i_1} \ket{\emptyset}$ span the degree $n$ part of $\ZZ_{(2)}[f_0,f_1]\ket{\emptyset} = \Delta$, it follows that the bound of Theorem \ref{thm:v2sumsquares} has to be tight too, i.e. for every $n$ there must exist $v,w$ for which $\sum_{\lam \vdash n} C_2(v, \lam) C_2(w, \lam)$ is divisible exactly by $2^{n-a(n)}$.

For fixed $v,w$ (e.g. $v=w=(0,1,0,1, \ldots)$ in the case of chess tableaux), the sum might be divisible by a larger power of $2$, but computations suggest that it is generally not much larger.
\end{remark}

\section{Conclusion}

A natural question one can ask about Theorem \ref{thm:v2sumsquares} is whether it can be extended to any other $e\ne 2$. That is, given $v,w \in (\ZZ/e\ZZ)^n$, can we say anything about the divisibility properties of the sum $\displaystyle\sum_{\lam \vdash n} C_e(v,\lam)C_e(w,\lam)$?

If $e=1$, as mentioned in Remark \ref{rmk:SYT}, this comes down to counting all standard Young tableaux, and the sum is equal to $n!$, which is indeed highly divisible by any fixed prime $p$. Note that one way to prove that the sum is $n!$ is to use an action on $\Fc$ of the Heisenberg Lie algebra (the Lie algebra with basis $e,f,c$ with $[c,e]=[c,f]=0$ and $[f,e]=c$), which is analogous to our use of the action of $\g$ for $e=2$. On $\Fc$, $c$ acts as the identity, $f$ acts by the operator of adding a cell (of any color) to a partition and $e$ acts by removing a cell (see \cite[Chapter 8]{Sta} for details). 

If $e>2$, however, computer calculations of $\displaystyle\sum_{\lam \vdash n} C_e(v,\lam)C_e(w,\lam)$ did not reveal any divisibility pattern analogous to Theorem \ref{thm:v2sumsquares}. One explanation for why this phenomenon happens only for $e=2$ is that the basic representation over $\ZZ_{(2)}$ (the module $\Delta$) is "smaller than expected", i.e. it is not $\ZZ_{(2)}[p_1, p_3, p_5, ...]$ but rather $\ZZ_{(2)}[p_1, 2p_3, 4p_5, ...]$. This is sort of a coincidence that happens only for $e=2$; for larger $e$ we can still relate $e$-residue sequences to a representation of the affine algebra $\widehat{\mathfrak{sl}_e}$, but (at least when $e$ is prime) the $\ZZ_{(e)}$-module analogous to $\Delta$ appears to be simply $\ZZ_{(e)}[p_i : e \nmid i]$. The coincidence for $e=2$ can be traced back to the factor of $2$ in the commutation relations 

\[ \left[\begin{pmatrix} 1 & 0 \\ 0 & -1 \end{pmatrix}, \begin{pmatrix} 0 & 1 \\ 0 & 0 \end{pmatrix}\right] = 2\begin{pmatrix} 0 & 1 \\ 0 & 0 \end{pmatrix},\]
\[ \left[\begin{pmatrix} 1 & 0 \\ 0 & -1 \end{pmatrix}, \begin{pmatrix} 0 & 0 \\ 1 & 0 \end{pmatrix}\right] = -2\begin{pmatrix} 0 & 0 \\ 1 & 0 \end{pmatrix},\]
which are responsible for the emergence of factors of $2$ in the proof of Theorem \ref{thm:Delta-gen}. There is no analog of that for larger $e$.

Let us also mention that the form of the expression $\displaystyle\sum_{\lam \vdash n} \Chess(\lam)^2$ strongly suggests looking for a less algebraic and more combinatorial explanation for the powers of $2$ phenomenon into the RSK algorithm (which is indeed what the authors of \cite{CEF} suggest when they mention their observation). Under this viewpoint, the quantity $\displaystyle\sum_{\lam \vdash n} C_2(v,\lam)C_2(w,\lam)$ corresponds to the number of permutations $\pi$ of size $n$ which are mapped by RSK to a pair $(P,Q)$ of Young tableaux such that $P$ has $2$-residue sequence $v$ and $Q$ has $2$-residue sequence $w$. We have however not been able to find an alternative explanation of Theorem \ref{thm:v2sumsquares} that uses this viewpoint.

\section*{Acknowledgments}

We would like to thank Darij Grinberg for a useful answer on MathOverflow, which inspired our idea to try to describe exactly the $\ZZ_{(2)}$-module generated by the action of the generators of $\g$ (see \url{https://mathoverflow.net/q/403153/160416}), and for finding several typos. We would also like to thank Timothy Chow for letting us know about the status of the research on chess tableaux.

\bibliography{main.bib}
\bibliographystyle{plain}
\nocite{*}

\end{document}